\newtheorem{thm}{\protect\theoremname}
\newtheorem{prop}[thm]{\protect\propositionname}
\newtheorem{rem}[thm]{\protect\remarkname}
\providecommand{\definitionname}{Definition}
\providecommand{\propositionname}{Proposition}
\providecommand{\remarkname}{Remark}
\providecommand{\theoremname}{Theorem}
\providecommand{\lemmaname}{Lemma}
\begin{document}

\title{Semidefinite Relaxations for Stochastic Optimal Control Policies}

\author{Matanya B. Horowitz, Joel W. Burdick
\thanks{Matanya Horowitz and Joel Burdick are with the 
	Department of Control and Dynamical Systems,
        Caltech, 1200 E California Blvd., Pasadena, CA.
        The corresponding author is available at {\tt\small mhorowit@caltech.edu}}%
\thanks{Matanya Horowitz is supported by a NSF Graduate Research Fellowship.%
}
}

\maketitle
\thispagestyle{empty}
\pagestyle{empty}

\begin{abstract}
Recent results in the study of the Hamilton Jacobi Bellman (HJB) equation have led to the discovery of a formulation of the value function as a linear Partial Differential Equation (PDE) for stochastic nonlinear systems with a mild constraint on their disturbances. This has yielded promising directions for research in the planning and control of nonlinear systems. This work proposes a new method obtaining approximate solutions to these linear stochastic optimal control (SOC) problems. A candidate polynomial with variable coefficients is proposed as the solution to the SOC problem. A Sum of Squares (SOS) relaxation is then taken to the partial differential constraints, leading to a hierarchy of semidefinite relaxations with improving sub-optimality gap. The resulting approximate solutions are shown to be guaranteed over- and under-approximations for the optimal value function.
\end{abstract}

\section{Introduction}

As robots and autonomous systems are fielded in increasing complex situations, the ability to move safely in the presence of uncertain actuation and sensing, as well as dynamically changing and uncertain environments, becomes ever more important. Practically useful control and planning methods must also be rapidly computable, and should incorporate optimality criteria when possible.

Sampling based planners such as PRMs and RRTs \cite{LaValle:tb} have become popular for such problem because they are adaptable to a variety of problems, and often have rapidly computable solutions in higher dimensional problems. How- ever, these approaches typically rely on an abstraction of the state space that eliminates considerations such as stochasticity and dynamics. Secondary issues such as control effort and movement efficiency cannot also be readily incorporated in these main stream approaches.

Stochastic optimal control (SOC) provides an alternative framework, allowing for various important details of the problem to be directly incorporated into the motion planning formulation and solution. Many such SOC problems are discretized, resulting in Markov Decision Problems that can be solved through methods such as Value Iteration \cite{Bertsekas:1995uq}. In robotic applications, such discretizations become prohibitively difficult to solve due to the curse of dimensionality associated with robotic systems of even moderate complexity.

Recently it has been discovered that the (typically non- linear) Hamilton Jacobi Bellman (HJB) equation of optimal control may be transformed to a linear PDE given several mild assumptions \cite{Kappen:2005kb, Todorov:2009wja}.  This approach might lead to significant computational gains, allowing for practical applications of SOC. To date, this class of linearly solvable HJB problems has been solved through sampling methods suggested by the Feynman-Kac Lemma \cite{Theodorou:2011uz}.

This paper presents a novel alternative method to solve such problems using polynomial optimization and semidefinite programming.  Using {\em sum of squares} (SOS) techniques, \cite{Parrilo:2003ul}, we construct an approximate value function that satisfies the linearly solvable HJB equation. This allows for optimal control problems, including those typically found in robotic motion planning, to be computed quickly, with globally optimal solutions. In contrast to dynamic program- ming approaches, no discretization is required, postponing the curse of dimensionality and eliminating a potential source of approximation error. Moreover, our formulation leads directly to gap theorems, or bounds, on the approximation error.

{\bf Related Work.}  The study of linearly solvable SOC problems has developed along two lines of investigation. One is that of Linear MDPs \cite{Todorov:2009wja}, in which an MDP may be solved as a linear set of equations given several assumptions. By taking the continuous limit of the discretization, a linear PDE is obtained. In another line of work begun by Kappen
\cite{Kappen:2005kb} the same linear PDE has been found through a particular transformation of the HJB. Existing approaches for solving the resulting linear HJB PDE have focused on the use of the Feynman-Kac Lemma, which relates the solution to the linear PDE to the diffusion of a stochastic process. This approach has been developed by Theodorou et al. \cite{Theodorou:2010vd} into the Path Integral framework in use with Dynamic Motion Primitives. Therein, sampling is augmented with the use of suboptimal policies, producing better estimates of the dynamics when executing an optimal policy. The resulting samples trajectories can then be used to in turn improve the policy, and then the process is iterated. These results have been developed in a number of compelling directions \cite{Theodorou:2012wx,Theodorou:2010vk,Dvijotham:2011tv}.

The sampling-based approach developed through the Fenyman-Kac Lemma is an alternative to the approach presented here, with several potential advantages and disadvantages. Among these, sampling-based approaches such as that of Theodorou, may be more attractive in high dimensional state spaces.  However, the approach presented in this paper may be quite rapid in practice, produces a global solution with no need for iteration, and the scalability of the process is an open question that this paper only begins to investigate.  While  it is beyond the
scope of this paper, the framework presented below may find applications in the method of Control Lyapunov Functions \cite{Primbs:1999we} and Receding Horizon Control \cite{jadbabaie2001unconstrained}.

\section{Background}

Using techniques developed for polynomial optimization, we develop a method to obtain a universal approximation for the value function with the best error as measured in the pointwise norm. We will review the development of the linear optimal control PDE, along with the necessary tools of polynomial optimization.

\subsection{A Linear Hamilton-Jacobi-Bellman (HJB) Equation}

A common construction in the optimization literature is the value function, which captures the ``cost-to-go'' from a given state.  If such a quantity is known, the optimal action may be chosen as that which follows the gradient of the value, bringing the agent into the states with highest value over the remaining time horizon. The construction of the value function $V(x)$ presented here follows the development in \cite{EvangelosIFAC11}.

We focus on system with state $x_t\in\mathbb{R}^{n}$ at time $t$, control input
$u_t\in\mathbb{R}^{m}$, and dynamics that evolve according to the equation
  \begin{equation} \label{eq:stochastic-dynamics}
    dx_t = \left(f(x_t)+G(x_t)u_t\right)dt+B(x_t) \mathcal{L}\, d\omega_t
  \end{equation}
where the expressions $f(x),G(x),B(x)$ are assumed to be polynomial functions of the state variables $x$, and $\omega$ is a Brownian motion with (i.e., a stochastic process such that $\omega_t$ has independent increments with $\omega_t-\omega_s\sim N(0,t-s)$ for $N(\mu,\sigma^2)$ a
normal distribution). The matrix $\mathcal{L}$ is constant.  The system has costs $r_{t}$ accrued at time $t$ according to
  \begin{equation}
    r(x_{t},u_{t})=q(x_t)+\frac{1}{2}u_{t}^{T}Ru_{t}\label{eq:cost}
   \end{equation}
where $q(x)$ is a state dependent cost and the control effort enters quadratically.  We require $q(x)\ge 0$ for all $x$ in the problem domain (which is more carefully defined below).

The goal is to minimize the expected cost of the following functional,
  \begin{equation} \label{eq:cost-functional}
       J(x,u)=\phi_{T}(x_{T})+\int_{0}^{T}r(x_{t},u_{t}) dt
  \end{equation}
where $\phi_{T}$ represents a state-dependent terminal cost. The solution to this minimization is
known as the {\bf value function}, where, beginning from an initial point $x_{t}$ at time $t$
  \begin{equation} \label{eq:value-def}
   V\left(x_{t}\right)=\min_{u_{t:T}}\mathbb{E}\left[J\left(x_{t}\right)\right] 
  \end{equation}
with $u_{t:T}$ being short-hand notation for $u(t)$, $t\ \in\ [t,T]$.

The Hamilton-Jacobi-Bellman equation associated with this problem, arising from Dynamic Programming
arguments \cite{Fleming:2006tl}, is found to be
  \begin{equation} \label{eq:original-hjb}
       -\partial_{t}V = \min_{u}\left(r+\left(\nabla_{x}V\right)^{T}f 
        +\frac{1}{2}Tr\left(\left(\nabla_{xx}V\right)G\Sigma_{\epsilon}G^{T}\right)\right)
  \end{equation}
where we define $\Sigma_{\epsilon}=\mathcal{L}\mathcal{L}^{T}$. As the control effort enters
quadratically into the cost function it is a simple matter to solve for it analytically in
(\ref{eq:cost}):
  \[ u^{*}=-R^{-1}G^{T}V_{x}. \]
The minimal control, $u^{*}$, may then be substituted into (\ref{eq:original-hjb}) to yield the
following nonlinear, second order partial differential equation (PDE):
  \begin{eqnarray*}  \label{eq:hjb-pde-value}
     -\partial_{t}V & = & q+\left(\nabla_{x}V\right)^{T}f-\frac{1}{2}
           \left(\nabla_{x}V\right)^{T}GR^{-1}G^{T}\left(\nabla_{x}V\right)\\
                    & + & \frac{1}{2}Tr\left(\left(\nabla_{xx}V\right)B\Sigma_{\epsilon}B^{T}\right).
\end{eqnarray*}
The difficulty of solving this PDE is what usually prevents practitioners of optimal control from attempting to solve for the value function directly. However, it has recently been found \cite{EvangelosIFAC11,Todorov:2009wja,Kappen:2005bn} that with the assumption that there exists a $\lambda\in\mathbb{R}$ and a control penalty cost $R\in\mathbb{R}^{n\times n}$ satisfying this equation
  \begin{equation} \label{eq:noise-assumption}
    \lambda G(x)R^{-1}G(x)^{T}=B(x)\Sigma_{\epsilon}B(x)^{T}\triangleq\Sigma_{t}
   \end{equation}
and using the logarithmic transformation 
  \begin{equation} \label{eq:log-transform}
      V=-\lambda\log\Psi
  \end{equation}
it is possible, after substitution and simplification, to obtain the following {\em linear}
PDE from Equation (\ref{eq:hjb-pde-value}).
  \begin{equation} \label{eq:hjb-pde}
   -\partial_{t}\Psi = -\frac{1}{\lambda}q\Psi+f^{T}(\nabla_{x}\Psi)
      +\frac{1}{2}Tr\left(\left(\nabla_{xx}\Psi\right)\Sigma_{t}\right).
  \end{equation}
This transformation of the value function, which we call here the \emph{desirability}
\cite{Todorov:2009wja}, provides an additional, computationally appealing method through which to
calculate the value function.

Similar arguments may be made to develop value functions in an additional problems of
interest. These are listed in Table \ref{tab:PDE-types}. For brevity, an expression common to the
desirability equations is defined
  \begin{equation}\label{eq:L-psi}
     L(\Psi):=f^{T}(\nabla_{x}\Psi)+\frac{1}{2}Tr\left(\left(\nabla_{xx}\Psi\right)\Sigma_{t}\right)
  \end{equation}

\begin{rem}The condition (\ref{eq:noise-assumption})
can roughly be interpreted as a controllability-type condition: the system must have sufficient
control to span (or counterbalance) the effects of input noise on the system dynamics. A degree of
designer input is also given up, as the constraint restricts the design of the control penalty $R$,
requiring that control effort be highly penalized in subspaces with little noise, and lightly
penalized in those with high noise. Additional discussion is given in \cite{Todorov:2009wja}.
\end{rem}

\begin{table}
\begin{centering}
\begin{tabular}{|c|c|c|}
\hline 
 & Cost Functional & Desirability PDE\tabularnewline
\hline 
Finite & $\phi_{T}(x_{T})+\int_{0}^{T}r(x_{t},u_{t})$ & $\frac{1}{\lambda} q \Psi -\frac{\partial\Psi}{\partial t}=L(\Psi)$\tabularnewline
\hline 
First-Exit & $\phi_{T_{*}}(x_{T_{*}})+\int_{0}^{T_{*}}r(x_{t},u_{t})$ & $\frac{1}{\lambda} q \Psi=L(\Psi)$\tabularnewline
\hline 
Average & $\lim_{T\to\infty}\frac{1}{T}\mathbb{E}\left[\int_{0}^{T}r(x_{t},u_{t})\right]$ & $\frac{1}{\lambda} q \Psi -c\Psi=L(\Psi)$\tabularnewline
\hline 
\end{tabular}
\par\end{centering}

\caption{Constraints on the desirability function arising in a number of common
optimal control problems \cite{Todorov:2009wja}.
\label{tab:PDE-types}}
\end{table}

\subsection{Sum of Squares Programming}

We provide a brief review on Sum of Squares (SOS) programming, with additional technical details
available in \cite{Parrilo:2003fh}. These tools will be key in the development of approximate
solutions to \eqref{eq:hjb-pde}. In brief, \eqref{eq:hjb-pde} specifies a set of \emph{partial
differential} equality constraints that the optimal solution must satisfy. To develop instead a
close approximation these equality constraints may be relaxed to inequalities. The optimization
problem is then to find the best approximate solution that lies in the set of polynomials that
satisfy these inequality constraints, known as a semialgebraic set. SOS provides a method to perform
optimization over such a set.

Formally, a \emph{semialgebraic set} is a subset of $\mathbb{R}^{n}$ that is specified by a finite
number of polynomial equations and inequalities.  An example is
  \[ \left\{ \left(x_{1},x_{2}\right)\in\mathbb{R}^{2}\mid x_{1}^{2} 
       + x_{2}^{2}\le1,x_{1}^{3}-x_{2}\le0\right\}. \]
Such a set is not necessarily convex, and testing membership in the set is intractable in general
\cite{Parrilo:2003fh}. As we will see, however, there exists a class of semialgebraic sets that are
in fact semidefinite-representable. Key to this development will be first the ability to test for
non-negativity of a polynomial.

A multivariate polynomial $f(x)$ is a \emph{sum of squares} (SOS) if there exist polynomials
$f_{1}(x),\ldots,f_{m}(x)$ such that
  \[ f(x)=\sum_{i=1}^{m}f_{i}^{2}(x). \]
A seemingly unremarkable observation is that a sum of squares is always positive. Thus, a sufficient
condition for non-negativity of a polynomial is that the polynomial is SOS. Perhaps less obvious is
that membership in the set of SOS polynomials may be tested as a convex problem. We denote the
function $f(x)$ being SOS as $f(x)\in\Sigma(x)$.

\begin{thm} \label{thm:sos-test-sdp}
(\cite{Parrilo:2003fh}) Given a finite set of polynomials $\left\{ f_{i}\right\}
_{i=0}^{m}\in\mathbb{R}^{n}$ the existence of $\left\{ a_{i}\right\} _{i=1}^{m}\in\mathbb{R}$ such
that
  \[ f_{0}+\sum_{i=1}^{m}a_{i}f_{i}\in\Sigma(x) \]
is a semidefinite programming feasibility problem. 
\end{thm}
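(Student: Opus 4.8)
The plan is to reduce the existence question to the classical Gram-matrix characterization of sum-of-squares polynomials and then to recognize that this characterization carves out an affine slice of the cone of positive semidefinite matrices, which is exactly the feasible set of a semidefinite program. First I would bound the degree: since $f_0 + \sum_{i=1}^m a_i f_i$ ranges over a fixed finite-dimensional space of polynomials of some even degree $2d$, let $z(x)$ denote the column vector of all monomials $x^\alpha$ with $|\alpha| \le d$, of length $N = \binom{n+d}{d}$. The key lemma is the Gram representation: a polynomial $p$ of degree at most $2d$ satisfies $p \in \Sigma(x)$ if and only if there exists a symmetric matrix $Q \succeq 0$ with $p(x) = z(x)^T Q z(x)$ as a polynomial identity. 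For the ``if'' direction I would factor $Q = \sum_k v_k v_k^T$ (a Cholesky or spectral decomposition is available precisely because $Q \succeq 0$), giving $p = \sum_k (v_k^T z)^2$, a manifest sum of squares. For the ``only if'' direction, each summand in a representation $p = \sum_j f_j^2$ forces $\deg f_j \le d$, so $f_j = v_j^T z$ for some coefficient vector $v_j$, and $Q = \sum_j v_j v_j^T \succeq 0$ reproduces $p$.

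Next I would translate the polynomial identity $p(x) = z(x)^T Q z(x)$ into linear constraints on the unknowns. Expanding the quadratic form, the coefficient of each monomial $x^\gamma$ on the right-hand side equals $\sum_{\alpha + \beta = \gamma} Q_{\alpha \beta}$, a fixed linear functional of the entries of $Q$. Equating these, monomial by monomial, with the coefficients of $p = f_0 + \sum_{i=1}^m a_i f_i$ --- which are themselves affine in the scalars $a_i$ since the $f_i$ are fixed --- yields a finite system of linear equations in the joint unknowns $(Q, a_1,\dots,a_m)$.

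The feasibility problem therefore becomes: find a symmetric matrix $Q$ and reals $a_1,\dots,a_m$ subject to the affine equality constraints above together with $Q \succeq 0$. This is by definition a semidefinite feasibility problem, since the variables $a_i$ and the free entries of $Q$ enter the equalities linearly and the only conic constraint is the positive-semidefiniteness of $Q$, which establishes the claim.

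The one point that requires care --- and which I regard as the crux --- is the coefficient-matching map of the second paragraph. The Gram representation is not unique, and one must correctly account for the many-to-one way in which products of distinct monomials $x^\alpha x^\beta$ contribute to a single monomial $x^\gamma$, so that the equality constraints are stated over the correct (reduced) set of distinct monomials of $p$ rather than over all index pairs $(\alpha,\beta)$. Everything else is routine linear algebra and the structural observation that an affine section of the positive semidefinite cone is exactly the object optimized over by semidefinite programming.
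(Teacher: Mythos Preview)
Your argument is correct and is the standard Gram-matrix reduction: represent the candidate polynomial as $z(x)^T Q z(x)$, match coefficients to obtain linear equalities in $(Q,a_1,\dots,a_m)$, and impose $Q\succeq 0$. Note, however, that the paper does not actually supply its own proof of this theorem; it is stated as a cited result from Parrilo and left unproved, so there is no in-paper argument to compare against. The proof you have written is essentially the classical one found in that reference, so you have reproduced the intended argument.
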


Thus, while the problem of testing non-negativity of a polynomial is intractable in general, by constraining the feasible set to SOS the problem becomes tractable. The converse question, is a non-negative polynomial necessarily a sum of squares, is unfortunately false, indicating that this test is conservative \cite{Parrilo:2003fh}. Nonetheless, SOS feasibility will be sufficiently
powerful for our purposes.

\subsubsection{The Positivstellensatz}

Using SOS theory, it is possible to determine whether a particular polynomial, possibly
parameterized, is a sum of squares. The next step is to determine how to combine multiple polynomial
inequalities, i.e. semialgebraic sets of the form
  \[ P=\left\{ x\in\mathbb{R}^{n}\mid f_{i}(x)\ge0\mbox{ for all }i=1,\ldots,m\right\}  \]
for polynomial functions $f_{i}(x)$ where $x\in\mathbb{R}^{n}$.  The answer is given by Stengle's
\emph{Positivstellensatz}.

\begin{thm} (Positivstellensatz \cite{Stengle:1974ix}) The set
   \[ X=\left\{ x\mid f_{i}(x)\ge0,h_{j}(x)=0\text{ for all }i=1,\ldots,m,\, j=1,\ldots,p\right\} \]
is empty if and only if there exists $t_{i}\in\mathbb{R}[x]$, $s_{i},r_{ij},\ldots\in\Sigma$
such that
   \[ -1=s_{0}+\sum_{i}h_{i}t_{i}+\sum_{i}s_{i}f_{i}+\sum_{i\neq j}r_{ij}f_{i}f_{j}+\cdots \]
\label{thm:positivestellensatz}
\end{thm}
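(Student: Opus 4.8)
The plan is to prove the two implications separately, noting that one direction is elementary while the other rests on the machinery of real algebraic geometry. The claimed identity exhibits $-1$ as a combination that is manifestly nonnegative on $X$; the content of the theorem is that emptiness of $X$ is \emph{equivalent} to the existence of such an algebraic obstruction, i.e.\ that the obvious sufficient condition for infeasibility is also necessary.

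First I would dispatch the easy direction (existence of the identity $\Rightarrow$ $X$ empty). Suppose the identity
\[ -1=s_{0}+\sum_{i}h_{i}t_{i}+\sum_{i}s_{i}f_{i}+\sum_{i\neq j}r_{ij}f_{i}f_{j}+\cdots \]
holds, and suppose toward a contradiction that some $\bar{x}\in X$. Evaluating at $\bar{x}$: every $h_{j}(\bar{x})=0$ kills the ideal terms, every SOS multiplier $s_{0},s_{i},r_{ij},\ldots$ is nonnegative by Theorem~\ref{thm:sos-test-sdp}, and every monomial in the $f_{i}$ (each $f_{i}(\bar{x})$, each product $f_{i}(\bar{x})f_{j}(\bar{x})$, and so on) is nonnegative because $\bar{x}\in X$. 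Hence the right-hand side is $\geq 0$ while the left-hand side equals $-1$, a contradiction. Therefore $X=\emptyset$.

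The hard direction ($X$ empty $\Rightarrow$ existence of the identity) I would prove by contraposition, and this is where the real-algebraic apparatus enters. Let $T\subseteq\mathbb{R}[x]$ be the \emph{preordering} generated by $\{f_{i}\}$, namely all finite sums $\sum_{\alpha}\sigma_{\alpha}\prod_{i\in\alpha}f_{i}$ with each $\sigma_{\alpha}\in\Sigma$, and let $I$ be the ideal generated by $\{h_{j}\}$; a general element of $T+I$ is exactly the right-hand side above. Assuming $-1\notin T+I$, I would construct a point of $X$ as follows. By Zorn's lemma, extend $T$ (modulo $I$) to a cone $P$ maximal among those not containing $-1$; such a maximal cone is the nonnegative part of an ordering on the residue field $K$ of $\mathbb{R}[x]/\mathrm{supp}(P)$. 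Passing to the real closure $R\supseteq K$ yields an $\mathbb{R}$-algebra homomorphism $\mathbb{R}[x]\to R$ sending each $f_{i}$ to a nonnegative element and each $h_{j}$ to $0$; equivalently, the system $\{f_{i}\geq 0,\,h_{j}=0\}$ has a solution in $R^{n}$. The final step invokes the Artin--Lang homomorphism theorem (equivalently, the Tarski--Seidenberg transfer principle together with model completeness of real closed fields): a solution in some real closed extension field of a finite system of polynomial (in)equalities with real coefficients forces a solution in $\mathbb{R}^{n}$ itself. This produces $\bar{x}\in X$, so $X\neq\emptyset$, and the contrapositive is established.

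The main obstacle lies entirely in this last direction, and specifically in the transfer step. Constructing the ordering via Zorn's lemma is routine once the cone formalism is in place, but the descent from a point of an abstract real closed field down to an honest real point requires the model-theoretic fact that the first-order theory of real closed fields is complete, so that satisfiability of a system of polynomial constraints is preserved between $\mathbb{R}$ and any real closed extension. I would therefore either develop the theory of orderings and real closures in full, or cite the Artin--Lang theorem directly as a black box; the remaining bookkeeping—verifying that $T$ is a preordering, that maximal proper cones induce orderings, and that the generic point satisfies the constraints—is standard and I would not reproduce it in detail.
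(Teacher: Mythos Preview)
The paper does not prove this theorem at all: it is stated as a background result with a citation to Stengle~\cite{Stengle:1974ix}, and no argument is given. There is therefore no ``paper's own proof'' to compare against.

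Your outline is a correct sketch of the standard proof one finds in real algebraic geometry (e.g.\ Bochnak--Coste--Roy or Prestel--Delzell). The easy direction is exactly right. For the hard direction, your strategy---showing that if $-1$ does not lie in the preordering generated by the $f_i$ plus the ideal generated by the $h_j$, then a Zorn's-lemma extension to a maximal proper cone yields an ordering on a residue field, and then Artin--Lang / Tarski--Seidenberg transfers a solution from the real closure down to $\mathbb{R}^n$---is the standard one and is sound. The only place I would tighten the exposition is the phrase ``extend $T$ (modulo $I$) to a cone $P$'': it is cleaner to say that $T+I$ is itself a preordering of $\mathbb{R}[x]$ (since $I\subseteq (T+I)\cap -(T+I)$), that $-1\notin T+I$ makes it proper, and that a maximal proper preordering containing it has prime support and induces an ordering on the residue field at that support. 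But this is a matter of phrasing, not a gap.
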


Although this theorem is presented in terms of feasibility, it is easily adapted for the purposes of
optimization. Given the problem
  \begin{flalign*}
    \min\,\, & f_{0}(x)\\
    \text{subject to}\,\, & f_{i}(x)\le0\,\,\,\,\forall i\in1,\ldots,k
   \end{flalign*}
a slack factor $\gamma$ may be introduced to frame the equivalent infeasibility problem
  \begin{flalign*}
      \max\,\, & \gamma\\
       s.t.\,\, & \left.\begin{array}{l}
       f_{0}(x)\le\gamma\\
       f_{i}(x)\le0\,\,\,\,\forall i\in1,\ldots,k
       \end{array}\right\} \text{\emph{infeasible}}
  \end{flalign*}
which is in a directly applicable form for the \emph{Positivestellensatz}. 

A sufficient condition for infeasibility may be created by limiting the inclusion of some of the
multipliers, e.g. setting some to zero such as $r_{ij}$ or $r_{ijk}$. Alternatively, it is possible
to limit the degree of the multipliers $h_{i},s_{i},r_{ij}$. In the search for infeasibility we may
therefore begin with a limited polynomial degree, increasing the degree if additional precision is
required. This creates a \emph{hierarchy} of semidefinite relaxations of increasing complexity but
also with a decrease in the suboptimality of the solution. This construction is known more broadly
as a Theta Body relaxation \cite{gouveia2010theta}.

\section{Sum-of-Squares Relaxation of the HJB PDE}

Sum of squares programming has found many uses in combinatorial optimization, control theory, and
other applications. We now expand its use to include finding approximate solutions to the value
function of the stochastic optimal control problem.

Obtaining solutions to linear PDEs is far from trivial. However, we propose to first approximate the
desirability solution to the linear HJB PDE as a polynomial. While the value function may in fact be
discontinuous, we make the modeling assumption that it may be approximated to a sufficiently high
accuracy given a polynomial of sufficient degree. Furthermore, although the solution to the HJB is
discontinuous in some locations, in many continuous domains, such as many robotics and control
problems of interest, it will remain continuous over large portions of the domain. Historically,
difficulties with the discontinuities present in HJB equations have led to the development of
viscosity solutions \cite{Fleming:2006tl}, in effect placing a smoothness requirement on the
solution.

We proceed with the finite horizon problem, but similar steps apply to all the problems listed in Table \ref{tab:PDE-types}. We make the assumption that the control problem occurs only on a compact domain $\mathbb{S}$ that is representable as a semialgebraic set, as is its boundary $\partial\mathbb{S}$. 

The equality constraint of \eqref{eq:hjb-pde} may be relaxed, yielding the following constraints that
are necessary for an over-approximation of the desirability function
  \begin{equation}  \label{eq:over-approximation-relax}
     \frac{1}{\lambda}q\Psi \le \partial_{t}\Psi+f^{T}(\nabla_{x}\Psi) 
        + \frac{1}{2} Tr\left(\left(\nabla_{xx}\Psi\right)\Sigma_{t}\right) 
\end{equation}
Hereafter, we will indicate solutions to the above inequality as $\Psi$, and exact solutions to
\eqref{eq:hjb-pde} as $\Psi^*$, the optimal desirability function. To obtain the best such approximation $\Psi$ for a given
polynomial order, the pointwise error of the approximation may be minimized in the optimization problem
\begin{flalign*}
\min\,\, & \gamma\\
s.t.\,\, & \gamma-\left(\frac{1}{\lambda} q \Psi - \partial_t \Psi -L(\Psi)\right)\ge0
\end{flalign*} 
for $x\in\mathbb{S}$. The boundary conditions of (\ref{eq:hjb-pde}) correspond to the exit conditions of the optimal control problem. In all problems this may correspond to colliding with an obstacle or goal region, and in the finite horizon problem there is the added boundary condition of the terminal cost at $t=T$. These final costs must then be transformed according to (\ref{eq:log-transform}), producing the added constraint
    \[ \Psi\mid_{\partial\mathbb{S}}=e^{-\frac{\phi_T(x_T)}{\lambda}} \]
where $\phi_T(x_T)$ is the terminal cost from \eqref{eq:cost-functional}. This constraint may be
also be relaxed as an inequality. The complete optimization problem is then
\begin{alignat}{2}
\min \quad & \gamma                                                    & \label{eq:sos_pde} \\
s.t.    \quad& \frac{1}{\lambda}q\Psi \le \partial_{t}\Psi+L(\Psi)        & x\in\mathbb{S} \nonumber\\
         & \gamma                          \ge\frac{1}{\lambda}q \Psi- \partial_t \Psi -L(\Psi) & x\in\mathbb{S}  \nonumber\\
         & \Psi                                 \le e^{-\frac{\phi_T(x)}{\lambda}}                             & x\in\partial\mathbb{S} \nonumber
\end{alignat}
As the inequalities are defined over polynomials, this optimization is defined over a semialgebraic
set. This may be made tractible as follows.

\begin{prop}\label{prop:sos-opt}
The optimization problem \eqref{eq:sos_pde} where inequality constraints are relaxed to SOS
membership may be solved as a semidefinite optimization program.
\end{prop}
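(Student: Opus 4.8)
The plan is to show that, after parameterizing the unknown desirability function $\Psi$ as a polynomial with real coefficients, every constraint in \eqref{eq:sos_pde} becomes a statement that a polynomial depending \emph{affinely} on the decision variables is nonnegative on a semialgebraic set, and that the SOS relaxation of each such statement is exactly an instance of the semidefinite feasibility test of Theorem \ref{thm:sos-test-sdp}. Since the objective $\min\gamma$ is linear, the collection of these constraints together with the linear objective is a single semidefinite program.

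First I would fix a monomial basis $\{m_j(x,t)\}$ and write $\Psi = \sum_j c_j m_j$, so that the unknown coefficients $c_j$ enter $\Psi$ linearly. The key structural observation is that the operator $L$ of \eqref{eq:L-psi}, built from the partial derivatives $\nabla_x$, $\nabla_{xx}$ and multiplication by the fixed polynomial data $f$ and $\Sigma_t$, maps polynomials to polynomials and acts linearly on $\Psi$; likewise $\partial_t\Psi$ and the product $q\Psi$ (with $q$ a fixed polynomial) are polynomials whose coefficients are linear functions of the $c_j$. Consequently each of the three residual expressions
\[
\partial_t\Psi + L(\Psi) - \frac{1}{\lambda}q\Psi, \quad
\gamma - \left(\frac{1}{\lambda}q\Psi - \partial_t\Psi - L(\Psi)\right), \quad
e^{-\phi_T(x)/\lambda} - \Psi
\]
is a polynomial in $(x,t)$ whose coefficients are affine in the decision variables $(c_j,\gamma)$.

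Next I would apply the Positivstellensatz, Theorem \ref{thm:positivestellensatz}, to each constraint, using the semialgebraic descriptions of $\mathbb{S}$ (for the first two) and $\partial\mathbb{S}$ (for the third). Requiring, for instance, $p \ge 0$ on $\mathbb{S} = \{g_i \ge 0\}$ is relaxed to the certificate $p = s_0 + \sum_i s_i g_i + \cdots$ with the multipliers $s_i \in \Sigma$. Fixing a bound on the multiplier degrees selects one level of the hierarchy and renders each certificate finite-dimensional. Matching monomial coefficients on both sides of each certificate produces linear equations relating the $c_j$, $\gamma$, and the unknown coefficients of the multipliers, while Theorem \ref{thm:sos-test-sdp} expresses the requirement that each multiplier (and the leading term $s_0$) be SOS as the positive semidefiniteness of an associated Gram matrix. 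The assembled problem---linear objective $\min\gamma$, linear equality constraints from coefficient matching, and PSD constraints on the Gram matrices---is by definition a semidefinite program.

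The step I expect to be the main obstacle is verifying that no bilinear terms in the decision variables are introduced, so that the final problem is genuinely affine (and hence an SDP rather than a nonconvex bilinear program). This holds because the SOS multipliers multiply only the \emph{fixed} set-defining polynomials $g_i$, never the unknown coefficients of $\Psi$, and $\Psi$ itself appears only inside the certified polynomial $p$; thus each certificate is jointly affine in $(c_j,\gamma)$ and the multiplier coefficients. A secondary caveat is the boundary term $e^{-\phi_T(x)/\lambda}$, which is not polynomial in general; to remain within the framework one assumes the transformed terminal cost is polynomial, or replaces it by a polynomial over- or under-approximation on $\partial\mathbb{S}$, after which the boundary inequality is handled identically to the others.
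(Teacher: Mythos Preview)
Your proposal is correct and follows essentially the same approach as the paper: parameterize $\Psi$ as a fixed-degree polynomial, observe that each constraint in \eqref{eq:sos_pde} becomes a polynomial nonnegativity condition over a semialgebraic set, relax these to SOS certificates via Theorems~\ref{thm:sos-test-sdp} and~\ref{thm:positivestellensatz}, and conclude that the result is an SDP. Your version is considerably more explicit than the paper's---in particular your verification that no bilinear terms arise and your caveat about the non-polynomial boundary data $e^{-\phi_T/\lambda}$ are points the paper's proof passes over silently---but the underlying argument is the same.
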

\begin{proof}
Let us propose a candidate solution to the optimization $\Psi$, a polynomial of fixed degree $n$,
denoted $\Psi_n$. Each of the inequality constraints are non-negativity constraints over a
polynomial and are therefore a semialgebraic set. The full set of constraints is an intersection of
semialgebraic sets and therefore also a semialgebraic set. When the inequalities in this set are
relaxed as SOS constraints, membership in the constraint set may be tested as a semidefinite program
by Theorem \ref{thm:sos-test-sdp}. The optimization over this set is then enabled by Theorem
\ref{thm:positivestellensatz}.
\end{proof}

Furthermore, one can in fact guarantee the exact and polynomial approximate desirability functions have a bounded relationship.
\begin{thm} \label{prop:Psi_bound}
Given a solution $\left\{ \Psi,\gamma\right\} $ to \eqref{eq:sos_pde}, and if $\Psi^*$ is the
solution to \eqref{eq:hjb-pde}, then $\Psi(x) \le \Psi^{*}(x)$ for all $x\in\mathbb{S}$.
\end{thm}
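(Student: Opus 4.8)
The plan is to establish the one-sided bound through a comparison (maximum-principle) argument, realized most transparently in its probabilistic, Feynman--Kac form. Note first that the map $\Psi \mapsto \partial_t\Psi + L(\Psi) - \tfrac{1}{\lambda}q\Psi$ is linear, that $\Psi^*$ annihilates it (it solves \eqref{eq:hjb-pde} with equality), and that any feasible $\Psi$ makes it nonnegative (this is precisely the first constraint of \eqref{eq:sos_pde}). Subtracting, the difference $W = \Psi - \Psi^*$ satisfies $\partial_t W + L(W) - \tfrac{1}{\lambda} q W \ge 0$ on $\mathbb{S}$, together with $W \le 0$ on $\partial\mathbb{S}$, the latter being the boundary inequality of \eqref{eq:sos_pde} compared against the exact terminal data met by $\Psi^*$. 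The claim is then that this differential inequality plus the boundary sign forces $W \le 0$ throughout $\mathbb{S}$.

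To make this rigorous I would pass to the uncontrolled diffusion whose infinitesimal generator is exactly $L$, namely $dx_s = f(x_s)\,ds + B(x_s)\mathcal{L}\,d\omega_s$, started from $x_t = x \in \mathbb{S}$, and let $\tau$ denote the first time the path reaches $\partial\mathbb{S}$ or the horizon $T$ (so $\tau \le T$ always). Setting $D_s = \exp(-\tfrac{1}{\lambda}\int_t^s q(x_r)\,dr)$, I would apply It\^o's formula to the polynomial $\Psi$ to compute the drift of $Y_s = D_s\,\Psi(x_s,s)$. Because $D_s$ has finite variation, that drift is $D_s\big(\partial_s\Psi + L(\Psi) - \tfrac{1}{\lambda}q\Psi\big)$, which is nonnegative by feasibility together with $D_s > 0$; hence $Y_s$ is a submartingale and optional stopping gives $\Psi(x,t) = Y_t \le \mathbb{E}[Y_\tau]$.

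It then remains to evaluate the right-hand side. At the stopping time the boundary constraint gives $\Psi(x_\tau,\tau) \le e^{-\phi_T(x_\tau)/\lambda}$, and since $D_\tau \ge 0$ we obtain $\mathbb{E}[Y_\tau] \le \mathbb{E}\big[D_\tau\, e^{-\phi_T(x_\tau)/\lambda}\big]$. The Feynman--Kac Lemma applied to the exact linear PDE \eqref{eq:hjb-pde} identifies this last expectation as $\Psi^*(x,t)$ itself, since $\Psi^*$ solves the PDE with equality and meets the terminal data exactly. Chaining the inequalities yields $\Psi(x,t) \le \Psi^*(x,t)$ for every $x \in \mathbb{S}$, as claimed. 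Crucially, this route differentiates only the polynomial $\Psi$ and never $\Psi^*$, which sidesteps the regularity difficulty flagged earlier, that $\Psi^*$ may only be a viscosity solution.

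The main obstacle is therefore not the algebra, since subtracting the inequality from the equality is immediate, but the justification of the stochastic-calculus steps: verifying that It\^o's formula and optional stopping are legitimate here. These hinge on $Y_s$ being bounded and $\tau$ being finite, which follow because $\overline{\mathbb{S}}$ is compact (so the polynomial $\Psi$ is bounded on it), because $q \ge 0$ and $\lambda > 0$ force $D_s \in (0,1]$, and because the finite-horizon stopping time satisfies $\tau \le T$. An alternative, purely analytic route would invoke the parabolic maximum principle for $W$ directly, using $q \ge 0$ to control the zeroth-order coefficient; this avoids probability but demands additional smoothness of $\Psi^*$ that the Feynman--Kac argument circumvents.
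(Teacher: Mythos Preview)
Your argument is correct, but it takes a genuinely different route from the paper. The paper proceeds purely analytically: it forms the error $e=\Psi-\Psi^*$, checks that $P(e):=L(e)-\tfrac{1}{\lambda}qe\ge 0$ by linearity (exactly your first paragraph), and then invokes the weak maximum principle for elliptic operators directly to conclude $\sup_{\mathbb{S}}e\le\sup_{\partial\mathbb{S}}e^{+}\le 0$; the parabolic (finite-horizon) case is handled by the parabolic maximum principle after reversing time so that the terminal condition becomes initial data. In other words, the paper's proof is precisely the ``alternative, purely analytic route'' you sketch in your last sentence.

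Your probabilistic version---apply It\^o to the discounted polynomial $D_s\Psi(x_s,s)$, observe the drift is nonnegative so the process is a submartingale, stop at $\tau$, and identify the resulting expectation with $\Psi^*$ via Feynman--Kac---buys you exactly what you claim: you never differentiate $\Psi^*$, so the argument goes through even if $\Psi^*$ is only known as a Feynman--Kac expectation or a viscosity solution rather than a classical $C^{1,2}$ solution. The paper's maximum-principle route is shorter and self-contained, but it tacitly assumes enough regularity of $\Psi^*$ for the classical comparison principle to apply. Your proof also meshes naturally with the Feynman--Kac viewpoint the paper discusses elsewhere. One small caveat: your optional-stopping justification leans on $\tau\le T$, which covers the finite-horizon problem; for the first-exit problem treated first in the paper you would need an additional argument (e.g., nondegeneracy of $\Sigma_t$ on the compact $\mathbb{S}$ to ensure $\tau<\infty$ a.s., or a localization plus $q\ge 0$).
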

\begin{proof}
Consider the first-exit case for simplicity, and define the error between approximation $\Psi$ and the optimal desirability $\Psi^*$, $e=\Psi - \Psi^*$. Then, as all operators are linear,
\begin{align*}
\frac{1}{\lambda} q e & = \frac{1}{\lambda} q \left( \Psi - \Psi^* \right) \\
 & = \frac{1}{\lambda} q \Psi - L(\Psi^*) \\
 & \le L(\Psi) - L(\Psi^*) \\
 & \le L(e)
\end{align*}

Defining the augmented operator $P(e):= L(e) - \frac{1}{\lambda} q e$ then $P$ is an elliptic operator and by the weak maximum principle for elliptic operators \cite{max_principles}
\begin{equation}
\sup_\mathbb{S} e \le \sup_{\partial \mathbb{S}} e^+
\end{equation}
where $e^+=\max (e,0)$ and $e$ is non-positive on the boundary. Thus, the error remains less than zero everywhere, implying that $\Psi \le \Psi^*$, and that $\Psi$ is indeed a lower bound.

The weak maximum principle for parabolic operators can similarly be used in the case where the desirability PDE is parabolic. The only difference to note is that the augmented operator is now $P(e):= L(e) + \partial_t - \frac{1}{\lambda}q e$. For the weak maximum principle to be used, it is required that $P(e)$ have the same form but with a negative temporal derivative $P(e) = L(e) - \partial_t - \frac{1}{\lambda} q e$. This is in fact the form of our operator, as the boundary condition along the time axis is assigned only at the terminal time, and the direction of time must be flipped in the proof relative to the time of the system's evolution.
\end{proof}
\begin{rem}
This construction may be repeated for each of the objective functions found in Table \ref{tab:PDE-types},
albeit the average cost constant $c$ must be determined a-priori
in the average cost case \cite{Todorov:2009wja}.
\end{rem}

Note that the principle underlying Proposition \ref{prop:sos-opt} may in fact be repeated with the inequalities reversed in optimization
\eqref{eq:sos_pde}, resulting in a superharmonic error function. The result is that this reversed optimization is shown to be an over-approximation.

\begin{thm}
The optimization problem \eqref{eq:sos_pde} where inequality constraints are reversed and then replaced with SOS membership may be solved as a semidefinite program, and furthermore produces a
upper bound $\Psi$ of $\Psi^*$ on the domain $\mathbb{S}$.
\end{thm}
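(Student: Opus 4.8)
The plan is to follow the same two-step template used in Proposition~\ref{prop:sos-opt} and Theorem~\ref{prop:Psi_bound}, but with every inequality in \eqref{eq:sos_pde} reversed. First I would establish semidefinite representability. Reversing an inequality $f\le g$ into $f\ge g$ merely replaces the polynomial non-negativity constraint $g-f\ge0$ by $f-g\ge0$; each remains a single polynomial non-negativity condition, so the feasible set is still an intersection of semialgebraic sets. Relaxing these to SOS membership and invoking Theorem~\ref{thm:sos-test-sdp} together with Theorem~\ref{thm:positivestellensatz} then yields exactly the SDP formulation argued in Proposition~\ref{prop:sos-opt}; the reversal changes none of that reasoning.

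For the bound itself I would mirror the error analysis of Theorem~\ref{prop:Psi_bound} with the signs flipped. Treating the first-exit case, set $e=\Psi-\Psi^*$. Using $\frac{1}{\lambda}q\Psi^*=L(\Psi^*)$ and the reversed interior constraint $\frac{1}{\lambda}q\Psi\ge L(\Psi)$, linearity of $L$ gives
\begin{align*}
\frac{1}{\lambda}qe &= \frac{1}{\lambda}q\Psi - L(\Psi^*) \\
 &\ge L(\Psi)-L(\Psi^*) = L(e),
\end{align*}
so the augmented operator $P(e):=L(e)-\frac{1}{\lambda}qe$ satisfies $P(e)\le0$ on $\mathbb{S}$. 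Because $q\ge0$, the zeroth-order coefficient of $P$ is non-positive, which is precisely the hypothesis the weak maximum principle for elliptic operators requires. The reversed boundary constraint $\Psi\ge e^{-\phi_T(x)/\lambda}=\Psi^*$ on $\partial\mathbb{S}$ gives $e\ge0$ there.

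To finish I would apply the principle to $-e$ rather than $e$: since $P$ is linear, $P(e)\le0$ is equivalent to $P(-e)\ge0$, so the weak maximum principle yields $\sup_{\mathbb{S}}(-e)\le\sup_{\partial\mathbb{S}}(-e)^+$. The boundary inequality $e\ge0$ forces $(-e)^+=0$ on $\partial\mathbb{S}$, hence $-e\le0$ throughout $\mathbb{S}$, that is $e\ge0$ and $\Psi\ge\Psi^*$ everywhere, the claimed upper bound. The parabolic case proceeds identically, with the augmented operator carrying the temporal term exactly as in Theorem~\ref{prop:Psi_bound} and the time direction reversed so that the terminal data play the role of the exit boundary.

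The main obstacle I expect is not the algebra but verifying that the hypotheses of the maximum principle genuinely hold for $P$ in this reversed setting. One must confirm that $L$ is (uniformly) elliptic on $\mathbb{S}$, which follows from $\Sigma_t=\mathcal{L}\mathcal{L}^T\succeq0$, and that the sign of the zeroth-order term $-\frac{1}{\lambda}q$ is the one the minimum-principle direction demands, rather than silently reusing the sign convention from the lower-bound proof. Getting the direction of the principle correct, namely applying it to $-e$ and accordingly handling $(-e)^+$ instead of $e^+$, is where an error would most easily creep in.
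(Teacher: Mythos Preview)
Your proposal is correct and mirrors precisely what the paper intends: the paper gives no explicit proof of this theorem, merely remarking that the argument of Proposition~\ref{prop:sos-opt} and Theorem~\ref{prop:Psi_bound} ``may in fact be repeated with the inequalities reversed \ldots\ resulting in a superharmonic error function,'' which is exactly the $P(e)\le0$ you derive. Your device of applying the weak maximum principle to $-e$ is equivalent to invoking the weak minimum principle for the superharmonic $e$, so the route is the same; the caveat you raise about uniform ellipticity of $\Sigma_t$ (note $\Sigma_t=B\Sigma_\epsilon B^T$, not $\mathcal{L}\mathcal{L}^T$) is a genuine technical point the paper also leaves unaddressed.
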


With upper and lower bounds to the optimal desirability function obtained, the distance between each and the optimal $\Psi^*$ is bounded by a known value. Also, it is straightforward to relate these bounds to the value function as well.

\begin{prop}
Given an upper (lower) bound $\Psi$, to a solution $\Psi^*$ of \eqref{eq:hjb-pde}, then $V=-\lambda \log \Psi$ is a lower (upper) bound of $V^*$, the solution to \eqref{eq:hjb-pde-value}.
\end{prop}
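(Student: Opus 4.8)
The plan is to exploit the fact that the logarithmic change of variables \eqref{eq:log-transform} is a strictly monotone transformation, so that an inequality between desirability functions reverses into the opposite inequality between the corresponding value functions. First I would recall that by definition $V = -\lambda \log \Psi$, and likewise that the optimal value function satisfies $V^* = -\lambda \log \Psi^*$, where $\Psi^*$ solves \eqref{eq:hjb-pde}. The transformation presupposes $\lambda > 0$, as is standard in the linearly solvable HJB formulation. The single algebraic ingredient is that for fixed $\lambda > 0$ the map $\Psi \mapsto -\lambda \log \Psi$ is strictly decreasing on $(0, \infty)$, since its derivative $-\lambda/\Psi$ is negative there.

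With this in hand the two cases are immediate. Suppose $\Psi$ is an upper bound, i.e. $\Psi(x) \ge \Psi^*(x)$ for all $x \in \mathbb{S}$. Applying the decreasing map to both sides yields $-\lambda \log \Psi(x) \le -\lambda \log \Psi^*(x)$, that is $V(x) \le V^*(x)$, so $V$ is a lower bound of $V^*$. The lower-bound case is symmetric: if $\Psi(x) \le \Psi^*(x)$ then $V(x) \ge V^*(x)$ and $V$ over-approximates $V^*$.

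The one point requiring care, and the main obstacle, is that the logarithm is defined only where its argument is positive. For the upper-bound case this is automatic, since $\Psi \ge \Psi^* > 0$: the exact desirability $\Psi^* = e^{-V^*/\lambda}$ is strictly positive by construction, so any upper bound inherits positivity. For the lower-bound case, however, a polynomial approximation satisfying only $\Psi \le \Psi^*$ need not remain positive throughout $\mathbb{S}$, and wherever $\Psi$ vanishes or becomes negative the induced $V = -\lambda \log \Psi$ is undefined. I would therefore either restrict the claim to the subset of $\mathbb{S}$ on which the relevant bound is positive, or append a positivity constraint $\Psi \ge 0$ to the optimization \eqref{eq:sos_pde}, relaxed to SOS membership; this is again semidefinite-representable and restores well-definedness of the transformed bound on the whole domain. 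Once positivity is secured, the monotonicity argument above completes the proof with no further computation.
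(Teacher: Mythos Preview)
Your proposal is correct and follows essentially the same argument as the paper: both rely on the monotonicity of $\Psi \mapsto -\lambda \log \Psi$ for $\lambda>0$, applying it pointwise to convert an inequality on $\Psi$ into the reversed inequality on $V$. Your additional discussion of the positivity caveat for the lower-bound case is also present in the paper, but as a separate Remark immediately following the proposition rather than as part of the proof itself.
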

\begin{proof}
For $\Psi \ge \Psi^*$
\begin{eqnarray*}
V & = & - \lambda \log \Psi \\
  & \le & -\lambda \log \Psi^* \\
 & = &V^*
\end{eqnarray*}
Since $\lambda$ is always positive. Similar reasoning applies to the lower bound.
\end{proof}

\begin{rem}
Due to the nature of the log transformation \eqref{eq:log-transform}, $\Psi$ is necessarily positive
on the domain $\mathbb{S}$. This may be included as an addition constraint $\Psi \ge 0$ in
\eqref{eq:sos_pde}. However, in this case the optimization for the
lower bound of $\Psi^*$ may not converge. It is possible to instead neglect this constraint and for
its inapplicability to be remembered if the approximate desirability function $\Psi$ is in fact less
than zero at any point on the domain.
\end{rem}

\subsection{Analysis}

Some preliminary analysis of this approach demonstrates several appealing qualities. The first of these is that the convergence of the algorithm is guaranteed.

\begin{prop}
There exists a constant $c$ such that the SOS optimization problem arising from (\ref{eq:sos_pde})
has a solution for all $\gamma\ge c$\end{prop}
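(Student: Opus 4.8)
The plan is to show that the set of $\gamma$ for which the SOS-relaxed form of \eqref{eq:sos_pde} is feasible is non-empty and upward-closed in $\mathbb{R}$; any single feasible value then serves as $c$, since every larger $\gamma$ inherits feasibility. The upward-closedness is the easy half. Note that $\gamma$ enters only the second constraint, $\gamma \ge \frac{1}{\lambda}q\Psi - \partial_t\Psi - L(\Psi)$, and that raising $\gamma$ only relaxes it. If $(\Psi,\gamma_0)$ is feasible, Theorem~\ref{thm:positivestellensatz} supplies a certificate $\gamma_0 - \left(\frac{1}{\lambda}q\Psi - \partial_t\Psi - L(\Psi)\right) = \sigma_0 + \sum_i \sigma_i g_i$ with $\sigma_i\in\Sigma$ and $g_i\ge 0$ defining $\mathbb{S}$. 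For $\gamma\ge\gamma_0$ the same multipliers work once $\sigma_0$ is replaced by $\sigma_0 + (\gamma-\gamma_0)$, which remains SOS because a non-negative constant is a sum of squares; the first and third constraints are independent of $\gamma$ and so are unaffected.

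It remains to produce one feasible pair, which is the substantive half. Because the over-approximation and boundary constraints do not involve $\gamma$, it suffices to find an admissible $\Psi$ of the allotted degree and then take $\gamma$ large. The candidate $\Psi\equiv 0$ makes the residual $\frac{1}{\lambda}q\Psi - \partial_t\Psi - L(\Psi)$ identically zero, so the first constraint holds trivially, and on the boundary it reduces to $0\le e^{-\phi_T/\lambda}$, which holds since the right-hand side is strictly positive on the compact set $\partial\mathbb{S}$. With $\Psi$ fixed, this residual is a fixed polynomial and hence bounded above on the compact set $\mathbb{S}$ by some finite $M$; for $\gamma>M$ the second constraint is strictly positive on $\mathbb{S}$, and since $\mathbb{S}$ is a compact (Archimedean) semialgebraic set a Positivstellensatz certificate exists. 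Any such $\gamma$ may be taken as $c$.

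The main obstacle is this existence step, not the monotonicity. Two technical points need care. First, the boundary datum $e^{-\phi_T/\lambda}$ is not a polynomial, so it must be replaced by a positive polynomial lower bound before an SOS certificate over $\partial\mathbb{S}$ can be invoked. Second, the relaxation fixes the degrees of $\Psi$ and of the multipliers, whereas passing from pointwise strict positivity to an actual SOS certificate is guaranteed only once those degrees are large enough; making $c$ finite for the fixed-degree program actually solved therefore requires either admitting sufficiently high multiplier degrees or giving a direct constant-multiplier construction over $\mathbb{S}$.
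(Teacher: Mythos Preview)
Your argument is correct and shares the paper's core strategy---exhibit a constant polynomial that satisfies the relaxed constraints, then use monotonicity in $\gamma$---but the two differ in the choice of constant and in the machinery invoked. The paper appeals to elliptic (and parabolic) regularity to conclude that the true desirability $\Psi^*$ is smooth, hence continuous and bounded on the compact domain $\mathbb{S}$, and then proposes the constant polynomial equal to that bound as the feasible candidate, closing with ``a convex problem with a feasible point converges.'' You instead take $\Psi\equiv 0$, for which the PDE residual vanishes identically and the boundary inequality $0\le e^{-\phi_T/\lambda}$ is immediate, so no PDE regularity theory is needed at all. Your route is the more elementary one, and you are also more careful on the SOS side: you correctly flag that the non-polynomial boundary datum $e^{-\phi_T/\lambda}$ must first be replaced by a polynomial under-approximation before an SOS certificate can even be written, and that passing from strict positivity on a compact semialgebraic set to an actual Positivstellensatz certificate requires the multiplier degrees to be large enough---two technical points the paper's proof does not address.
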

\begin{proof}
For the PDEs in Table \ref{tab:PDE-types} that are elliptic, all problem data is polynomial and
therefore infinitely differentiable. By the elliptic regularity theorem, the solution $\Psi$ is
infinitely differentiable and therefore continuous. As this is a linear operator on a compact set,
it is continuous if and only if it is bounded. Therefore there exists some constant $c\ge\Psi$ on
the domain $\mathbb{S}$. Similarly for the parabolic case the above holds true fore each point in
time, and integration of these finite quantities over a bounded time period also produces bounded
solutions.

A constant polynomial $p(x,t)$ may be taken to be the plane with $p(x,t)=c$. As this is a polynomial
of degree zero, it is in the set of feasible solutions to (\ref{eq:sos_pde}). Since this is a convex
problem, the existence of a feasible solution $p(x,t)$ is sufficient for the algorithm to converge.
\end{proof}

Intuitively, the previous result states that there must exist constant values that upper and lower
bound the solution to the desirability, which are of course polynomial representable. Clearly such
bounds may be quite poor in practice. However, placing this problem within a hierarchy of
optimization problems with increasing polynomial degree we have the following result.

\begin{prop}
Let $\Psi_n$ be a polynomial approximation of the desirability function with maximum degre $n$.  The
hierarchy of SOS problems consisting of solutions to \eqref{eq:sos_pde} with increasing polynomial
degree produce a sequence of solutions $\left\{ \Psi_{i},\gamma_{i}\right\} _{i\in I}$ with
monotonically decreasing $\gamma_{i}$
\end{prop}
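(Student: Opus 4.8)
The plan is to prove monotonicity by a feasible-set containment argument of the kind standard to SOS (Lasserre/Parrilo) hierarchies: I will show that the feasible set of the relaxation at level $i$ is contained in that of level $i+1$, so that minimizing the same objective $\gamma$ over a larger set can only decrease the optimum. First I would pin down the meaning of ``increasing polynomial degree'': the hierarchy is indexed jointly by the degree $n$ of the candidate desirability polynomial $\Psi_n$ and by the degree budgets allotted to the SOS multipliers $s_i, t_i, r_{ij}$ appearing in the Positivstellensatz certificates (Theorem \ref{thm:positivestellensatz}) for the three constraints of \eqref{eq:sos_pde}. I would then let $\mathcal{F}_i$ denote the set of pairs $(\Psi,\gamma)$ admissible at level $i$, i.e.\ those for which valid multipliers of the prescribed degrees exist.

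The central step is to establish $\mathcal{F}_i \subseteq \mathcal{F}_{i+1}$, which rests on two observations. First, a polynomial of degree at most $n$ is also a polynomial of degree at most $n+1$ with vanishing leading coefficients, so every admissible $\Psi_n$ is an admissible candidate at the next level. Second, the cone of SOS polynomials of degree at most $2d$ is contained in the cone of SOS polynomials of degree at most $2(d+1)$, since a sum of squares written in the smaller monomial basis is trivially a sum of squares in the enlarged basis. Together these imply that any Positivstellensatz certificate verifying the constraints of \eqref{eq:sos_pde} at level $i$ remains valid at level $i+1$: the very same multipliers, reused and padded with zero coefficients in the higher-degree bases, still satisfy the defining identities. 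Hence each feasible pair at level $i$ is feasible at level $i+1$.

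With containment in hand, monotonicity is immediate. Because the objective $\gamma$ is identical across levels and $\mathcal{F}_i \subseteq \mathcal{F}_{i+1}$,
\[ \gamma_{i+1} = \min_{(\Psi,\gamma)\in\mathcal{F}_{i+1}}\gamma \;\le\; \min_{(\Psi,\gamma)\in\mathcal{F}_i}\gamma = \gamma_i, \]
so $\{\gamma_i\}_{i\in I}$ is monotonically nonincreasing, which is the claim. I would also remark that the preceding feasibility result (the constant-polynomial certificate) guarantees each $\mathcal{F}_i$ is nonempty, so each $\gamma_i$ is well defined; if a lower bound on $\gamma$ over the hierarchy can be exhibited, the monotone sequence additionally converges.

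I expect the main obstacle to be the careful bookkeeping in the second observation: verifying that a reused lower-level certificate genuinely yields a valid higher-level certificate requires matching the individual degree budgets of each multiplier in Theorem \ref{thm:positivestellensatz} and confirming that zero-padding violates neither the SOS membership of any $s_i$ nor the total degree constraints. One must also ensure the hierarchy is defined so that the degree bounds on both $\Psi$ and every multiplier are \emph{nondecreasing} in $i$; should the budget for some multiplier fail to be monotone, the containment $\mathcal{F}_i \subseteq \mathcal{F}_{i+1}$ can break, and with it the monotonicity conclusion.
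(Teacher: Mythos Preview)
Your approach is essentially the same as the paper's: both arguments establish monotonicity by noting that a feasible $(\Psi,\gamma)$ at level $i$ remains feasible at level $i+1$ by padding the additional degrees of freedom with zeros, so minimizing $\gamma$ over the enlarged feasible set can only decrease the optimum. The paper's proof is a two-sentence version of your containment argument, mentioning only the extra coefficients of $\Psi$; your proposal is more careful in that you also track the degree budgets of the Positivstellensatz multipliers and flag that monotonicity requires those budgets to be nondecreasing as well---a point the paper glosses over but which its own Table~\ref{tab:ex-1-solution-table} shows is empirically relevant.
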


\begin{proof}
Clearly for a sufficiently high degree polynomial $\Psi$, $\Psi^{*}$ may be represented exactly if
it is polynomial itself. Further, given a solution $\Psi$ to (\ref{eq:sos_pde}), and an additional
solution $\Psi'$ of higher degree, each with solutions $\gamma,\gamma'$ respectively,
$\gamma'\le\gamma$ as $\Psi'$ may achieve error $\gamma$ by setting its additional degrees of
freedom to zero, so the solution improves monotonically.
\end{proof}

%% ---------------------------------------------------- %%
\begin{figure}
\begin{centering}
\includegraphics[scale=0.2]{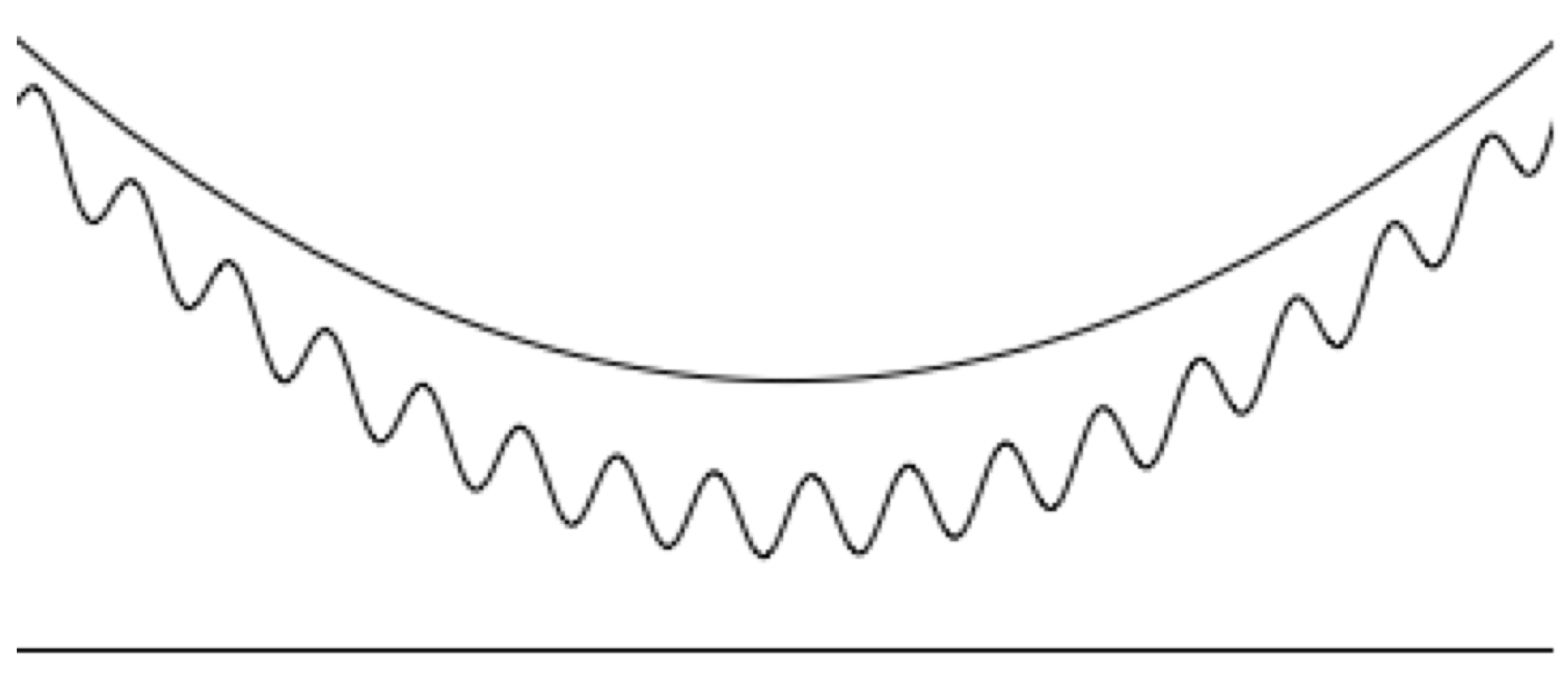}
\par\end{centering}
\caption{Illustration of potential mis-alignment between value function gradients despite proximity of approximate value function. The x-axis here denotes state space domain, while the y-axis denotes the cost-to-go at a particular state.
\label{fig:Illustration-of-potential-misalignment}}
\end{figure}
%% ---------------------------------------------------- %%

Note that we have no guarantee as to the divergence of the cost when executing the approximate value
function from the true value function.  We are only guaranteed that the value function is an
over-approximation at a particular state. A consequence is illustrated in Figure
\ref{fig:Illustration-of-potential-misalignment}.  By following the gradient, the system may diverge
significantly from the optimal path, further undermining the accuracy of the approximate value
function. This is an issue common to many approximate dynamic programming schemes
\cite{ODonoghue:2012vi, de2003linear, guestrin2003efficient}.  A common technique employed is to
simply use Monte Carlo simulation of the policy resulting from the approximate value solution,
providing an upper bound $J^{ub}$ on the realizable cost. Here, we may also flip the sign of the
inequality (\ref{eq:sos_pde}) to also obtain a lower bound. If the resulting sampled upper bound
$J^{ub}$ is near this lower bound, then the policy may be said to be empirically near-optimal.

\section{Examples}

\begin{figure}
\begin{centering}
\includegraphics[scale=0.2]{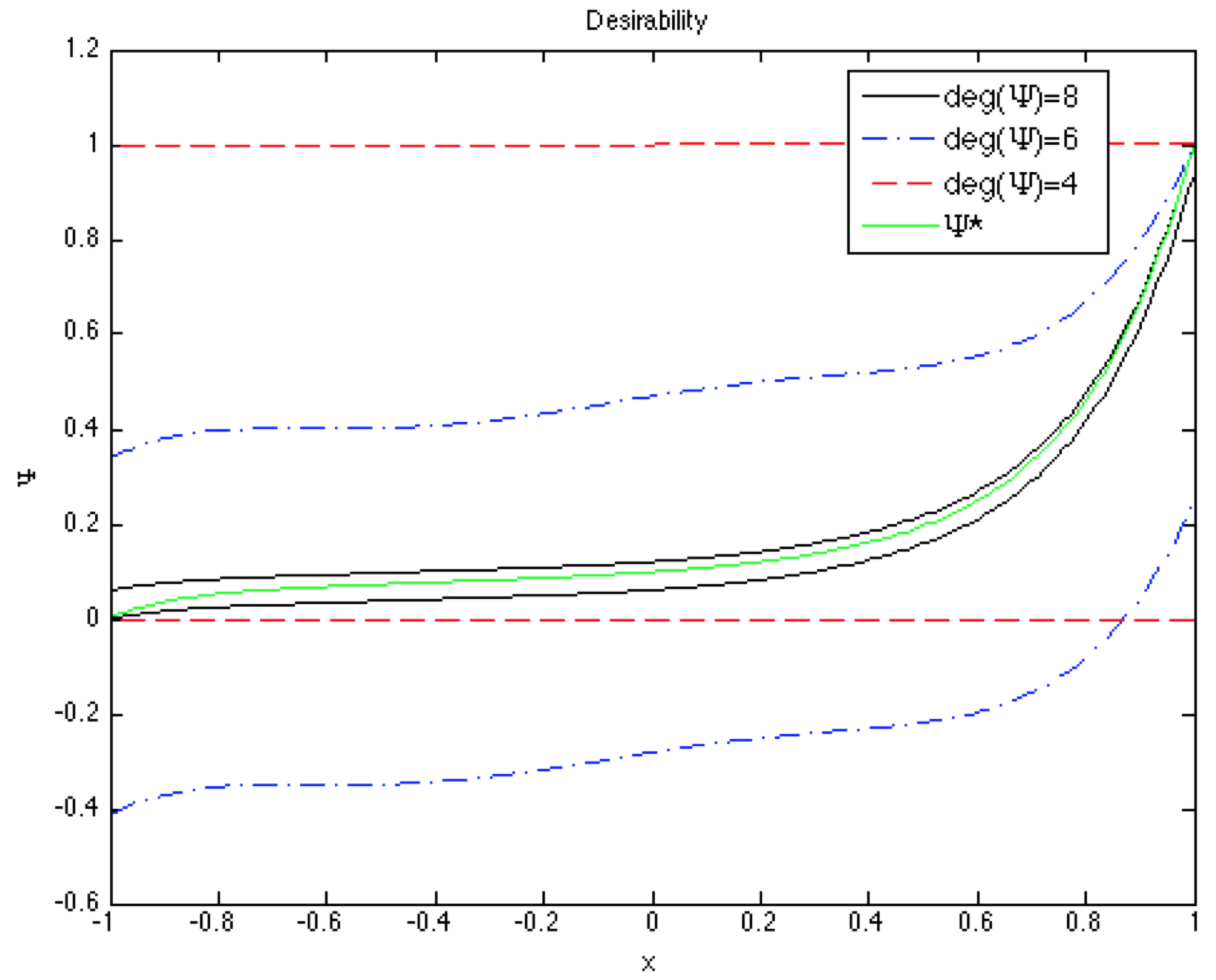}\\
\includegraphics[scale=0.2]{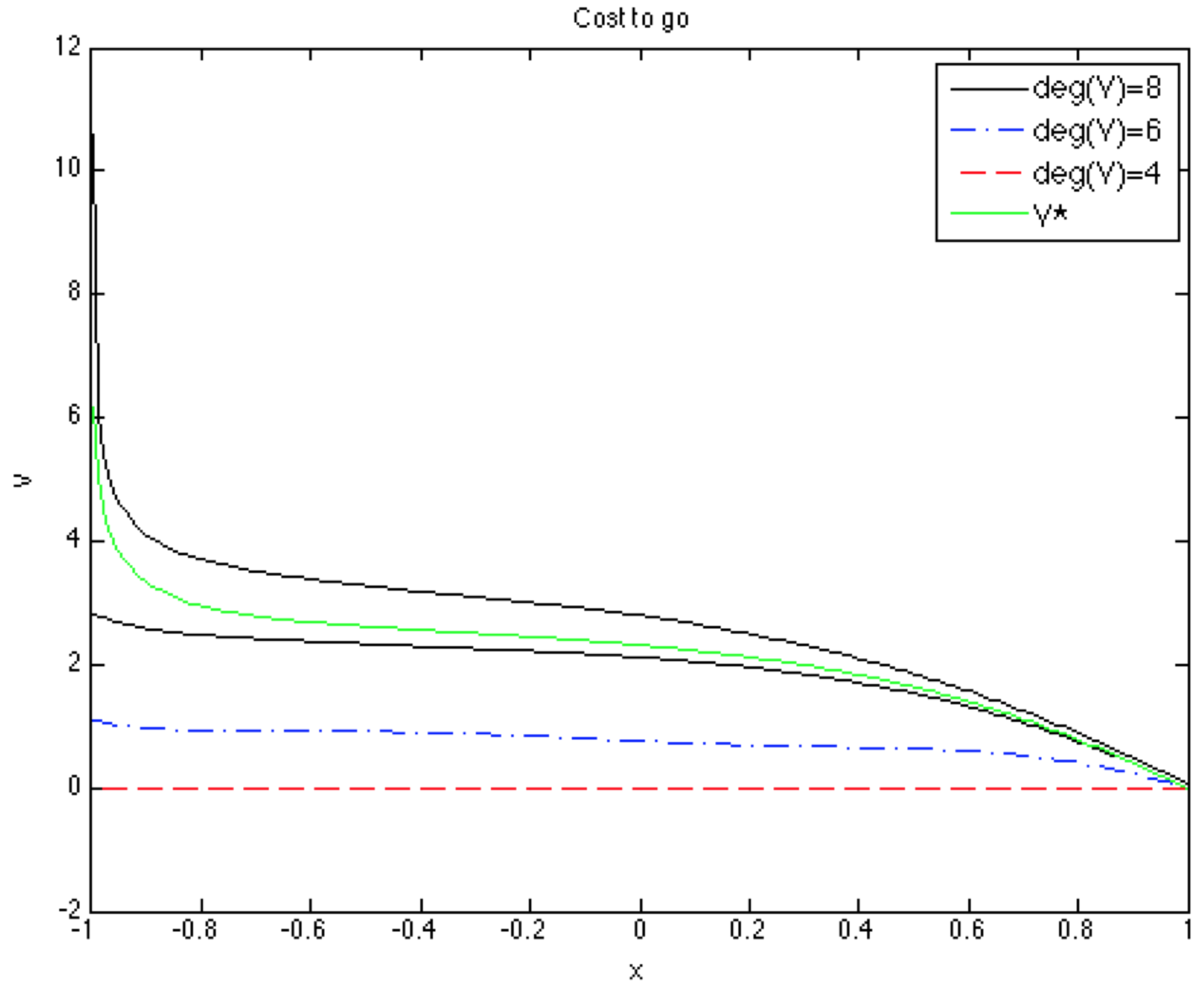}
\par\end{centering}
\caption{Plots of approximate and exact desirability and cost-to-go solutions for scalar 
system (\ref{eq:ex_1}) versus state $x$, in the interval $x\in [-1,1]$.  The dashed red,
dashed blue, and solid black lines represent the $deg(\Psi)=4$, $deg(\Psi)=6$, and $deg(\Psi)=8$
approximations. The multipliers of the Positivestellensatz were set to have matching degree, i.e. $deg(s_{i})=deg(\Psi)$.
\label{fig:Desirability-ex-1}}
\end{figure}

\begin{table} \label{table:example1}
\begin{centering}
\begin{tabular}{|c|c|c|c|c|c|}
\hline 
$deg(\Psi)$ \textbackslash{} $deg(s_{i})$ & 2 & 4 & 6 & 8 & 10\tabularnewline
\hline 
2 &1.0  &1.0  &1.0 &1.0 & 0.9994 \tabularnewline
\hline 
4 & 1.0 & 1.0 & 1.0 & 0.9999 & 0.9947 \tabularnewline
\hline 
6 &  1.0 & 1.0  & 0.7508  & 0.7498 & 0.7406 \tabularnewline
\hline 
8 &  1.0 & 1.0 & 0.2834 & 0.0592 & 0.0592 \tabularnewline
\hline
10 &  1.0 & 1.0 & 0.2834 & 0.0590 & 0.0487 \tabularnewline
\hline 
\end{tabular}
\par\end{centering}
\caption{Solution quality $\gamma$ of the desirability lower bound for varying polynomial degree of
solution $\Psi$ and Positivestellensatz multipliers $s_i$. \label{tab:ex-1-solution-table}}
\end{table}

A scalar and a two-dimensional pair of examples reveal preliminary results on the the computational
characteristics of the method. In the following problems the optimization parser Yalmip
\cite{yalmip} was used in conjunction with the semidefinite optimization package SDPT3 \cite{sdpt3}.

\subsection{Scalar System Example}

A nonlinear, unstable system with the following dynamics is considered
  \begin{equation} \label{eq:ex_1}
      dx=\left(x^{3}+5x^{2}+x+u\right)dt+d\omega
  \end{equation}
on the domain $x\in \mathbb{S} = [-1,1]$. The problem chosen is a first-exit problem, with
$\phi(-1)=10$, and $\phi(1)=0$. For this instance, $\mathcal{L}=1$, $G=1$, $B=1$, and the cost parameters
$q=1$, $R=1$ are assigned. Optimal solutions to \eqref{eq:sos_pde} of the desirability for varying
polynomial degree $deg(\Psi)$ are shown in Figure \ref{fig:Desirability-ex-1} along with its
transformed cost-to-go. The pointwise error in the desirability for increasing polynomial degree on
the solution and the multipliers is shown in Table \ref{tab:ex-1-solution-table}.  The figures
and the table clearly show that the higher the degree of polynomial approximation, the smaller the
approximation error.

\subsection{Two Dimensional Example}

Next, a nonlinear 2-dimensional problem example adapted from
\cite{Prajna:2003vr} was solved as a first-exit problem. The dynamics are set as
  \begin{eqnarray*}
   \left[\begin{array}{c}
    dx\\
    dy
   \end{array}\right] & = & \left(\left[\begin{array}{c}
                             -2x-x^{3}-5y-y^{3}\\
                              6x+x^{3}-3y-y^{3}
   \end{array}\right]+\left[\begin{array}{c}
    u_{1}\\
    u_{2}
  \end{array}\right]\right)dt\\
        & + & \left[\begin{array}{c}
     d\omega_{1}\\
     d\omega_{2}
  \end{array}\right]
\end{eqnarray*}

The system was given the task of reaching a boundary of the domain $\mathbb{S}=[-1,1]^2$, and once
there would fulfill its task with no additional cost. The control penalty was set to $R=I_{2\times 2}$, and state cost as $q(x)=0.1$. The boundary conditions for the sides
$x=-1,y=1,y=-1$ were set to have a penalty of $\phi(x,y)=1$, while for the remaining boundary $x=1$
the boundary was set to have a quadratic cost $\phi(x,y)=1-(y-1)^2$. The results are shown in Figure
\ref{fig:Results-of-multidimensional}.

\begin{figure}
\begin{centering}
\includegraphics[scale=0.2]{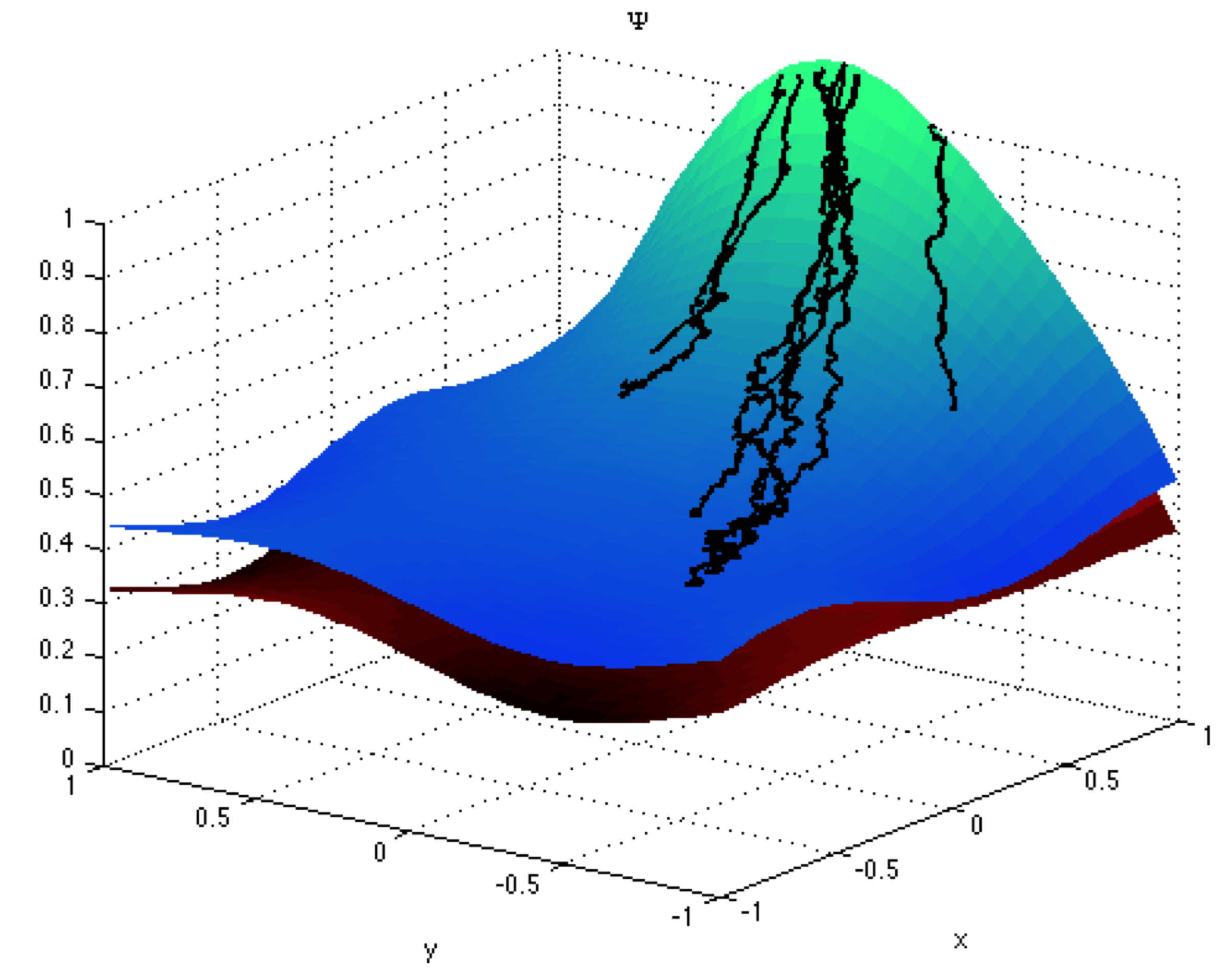}
\includegraphics[scale=0.2]{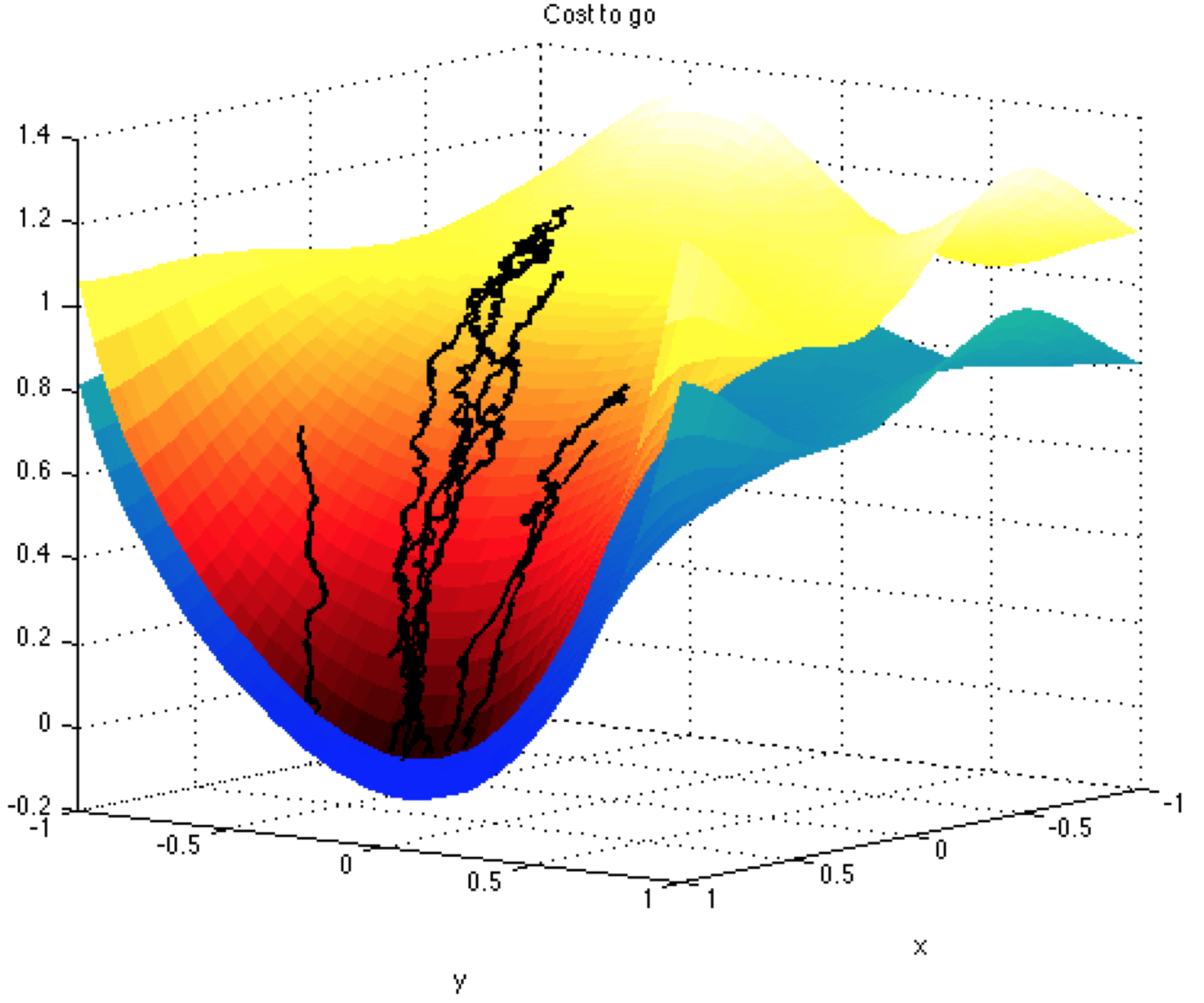}
\par\end{centering}
\caption{Desirability and Value solutions for the two dimensional example. The problem with solved
with $deg(\Psi)=deg(s_i)=14$. The upper bound had gap $\gamma_{up}=0.0979$, and lower bound
$\gamma_{lw}=0.1049$. Ten simulated trajectories of the closed loop system, randomly sampled from $x,y\in [-.75, .75]^2$ are shown in
black.\label{fig:Results-of-multidimensional}}
\end{figure}

\section{Discussion}

A method to find the value function for a class of stochastic optimal control problems was
proposed. Sum of squares and semidefinite programming was used to construct a global solution
without recourse to value iteration or other forms of dynamic programming.  The method produces
a-priori bounds on the solutions' pointwise error from the optimal HJB solution. Unfortunately,
a-priori error bounds on the cost of the trajectories resulting from policies which follow the
approximate solution were not obtained, but are the subject of further investigation. As it stands,
there is no guarantee that a specific objective will be obtained, e.g. to reach a goal region or
provide stabilization. Indeed, the mis-alignment of true and approximate value functions has
surfaced in the controls community \cite{Primbs:1999tt} as well as in the broader literature on
approximate dynamic programming \cite{de2003linear}.

The question remains of how the algorithms presented in this paper differ from the simple process of
applying approximate dynamic programming with polynomial basis functions. Key in this work is the
development in the continuous state space of the problem. Although approximate dynamic programming
aggregates states, it nonetheless begins from a discrete state space. The result is that the number
of constraints in the corresponding dynamic program depends on the size of the discrete state space
\cite{de2003linear}.  While in practice many of these constraints may be inactive, it isn't possible
to determine a-priori the inactive ones. Furthermore, as has been shown, the SOS framework gives
strong guarantees on the pointwise distance between the approximate and exact value functions.

There exists many interesting avenues for future investigation. Primary among these is the
incorporation and analysis of systems whose dynamics are not polynomial functions of state and
input. Although trigonometric functions were incorporated in several examples, a broader synthesis
that does not require ad-hoc analysis, as well as one that could incorporate discontinuities, is
needed.

Interesting connections exist with literature in the controls community.  Therein, efforts have been
made to use Lyapunov functions for optimal control, in this context dubbed Control Lyapunov
Functions. Unfortunately, methods to produce optimal control Lyapunov functions have eluded
researchers to date. The methods presented here seem promising in this light.

As mentioned, this method is proposed as an alternative to sampling based methods that utilize the
Feynman-Kac lemma. A distinct advantage of the Feynman-Kac based approach is that the required
sampling scales well with increasing dimension of the state space. It is an interesting question as
to how the method proposed here can be extended to high dimensional state spaces. The expressivity
of polynomials leads one to believe that they may be used to postpone the curse of dimensionality by
not requiring as fine a partition as a straightforward MDP-like discretization for a given desired
accuracy.

\subsection{Acknowledgements}

The authors would like to thank Venkat Chandrasakaran for guidance and suggestions.  The first author 
is grateful for the support provided by a National Science Foundation graduate fellowship.  This
work was partially supported by DARPA, through the ARM-S and DRC programs, as well as the Robotics
Technology Consortium Alliance (RCTA).

\bibliographystyle{abbrv}
\bibliography{references}

\begin{thebibliography}{10}

\bibitem{Bertsekas:1995uq}
D.~P. Bertsekas.
\newblock {\em {Dynamic Programming and Optimal Control}}.
\newblock Athena Scientific, 1995.

\bibitem{de2003linear}
D.~P. de~Farias and B.~Van~Roy.
\newblock The linear programming approach to approximate dynamic programming.
\newblock {\em Operations Research}, 51(6):850--865, 2003.

\bibitem{Dvijotham:2011tv}
K.~Dvijotham and E.~Todorov.
\newblock A unified theory of linearly solvable optimal control.
\newblock {\em Artificial Intelligence (UAI)}, 2011.

\bibitem{Fleming:2006tl}
W.~H. Fleming and H.~M. Soner.
\newblock {\em {Controlled Markov processes and viscosity solutions}},
  volume~25.
\newblock Springer, New York, July 2006.

\bibitem{gouveia2010theta}
J.~Gouveia, P.~A. Parrilo, and R.~R. Thomas.
\newblock Theta bodies for polynomial ideals.
\newblock {\em SIAM Journal on Optimization}, 20(4):2097--2118, 2010.

\bibitem{guestrin2003efficient}
C.~Guestrin, D.~Koller, R.~Parr, and S.~Venkataraman.
\newblock Efficient solution algorithms for factored {MDPs}.
\newblock {\em J. Artif. Intell. Res. (JAIR)}, 19:399--468, 2003.

\bibitem{jadbabaie2001unconstrained}
A.~Jadbabaie, J.~Yu, and J.~Hauser.
\newblock Unconstrained receding-horizon control of nonlinear systems.
\newblock {\em Automatic Control, IEEE Transactions on}, 46(5):776--783, 2001.

\bibitem{Kappen:2005bn}
H.~Kappen.
\newblock {Linear Theory for Control of Nonlinear Stochastic Systems}.
\newblock {\em Physical Review Letters}, 95(20):200201, Nov. 2005.

\bibitem{Kappen:2005kb}
H.~J. Kappen.
\newblock {Path integrals and symmetry breaking for optimal control theory}.
\newblock {\em Journal of Statistical Mechanics: Theory and Experiment},
  2005(11):P11011--P11011, Nov. 2005.

\bibitem{LaValle:tb}
S.~M. LaValle.
\newblock {Planning Algorithms (2006)}.
\newblock {\em Cambridge Univ Press}.

\bibitem{yalmip}
J.~Lofberg.
\newblock {YALMIP : a toolbox for modeling and optimization in MATLAB}.
\newblock In {\em Computer Aided Control Systems Design, 2004 IEEE
  International Symposium on}, pages 284--289, 2004.

\bibitem{ODonoghue:2012vi}
B.~O'Donoghue.
\newblock {\em {Suboptimal Control Policies Via Convex Optimization}}.
\newblock PhD thesis, Stanford University, 2012.

\bibitem{Parrilo:2003fh}
P.~A. Parrilo.
\newblock {Semidefinite programming relaxations for semialgebraic problems}.
\newblock {\em Mathematical Programming}, 96(2):293--320, May 2003.

\bibitem{Parrilo:2003ul}
P.~A. Parrilo and S.~Lall.
\newblock {Semidefinite programming relaxations and algebraic optimization in
  control}.
\newblock {\em European Journal of Control}, 9(2):307--321, 2003.

\bibitem{Prajna:2003vr}
S.~Prajna and A.~Papachristodoulou.
\newblock {Analysis of switched and hybrid systems-beyond piecewise quadratic
  methods}.
\newblock 4:2779--2784, 2003.

\bibitem{Primbs:1999we}
J.~Primbs.
\newblock {\em {Nonlinear optimal control: A receding horizon approach}}.
\newblock PhD thesis, California Institute of Technology, Apr. 1999.

\bibitem{Primbs:1999tt}
J.~A. Primbs, V.~Nevisti{\'c}, and J.~C. Doyle.
\newblock {Nonlinear optimal control: A control Lyapunov function and receding
  horizon perspective}.
\newblock {\em Asian Journal of Control}, 1(1):14--24, 1999.

\bibitem{max_principles}
M.~H. Protter and H.~F. Weinberger.
\newblock {\em Maximum Principles in Differential Equations}.
\newblock Springer, 1984.

\bibitem{Stengle:1974ix}
G.~Stengle.
\newblock {A nullstellensatz and a positivstellensatz in semialgebraic
  geometry}.
\newblock {\em Mathematische Annalen}, 207(2):87--97, June 1974.

\bibitem{Theodorou:2011uz}
E.~Theodorou.
\newblock {\em {Iterative path integral stochastic optimal control: Theory and
  applications to motor control}}.
\newblock PhD thesis, University of Southern California, June 2011.

\bibitem{Theodorou:2010vd}
E.~Theodorou, J.~Buchli, and S.~Schaal.
\newblock {A generalized path integral control approach to reinforcement
  learning}.
\newblock {\em The Journal of Machine Learning Research}, 9999:3137--3181,
  2010.

\bibitem{Theodorou:2010vk}
E.~Theodorou, J.~Buchli, F.~Stulp, and S.~Schaal.
\newblock {An Iterative Path Integral Reinforcement Learning Approach}.
\newblock In {\em Snowbird Learning Workshop}, July 2010.

\bibitem{Theodorou:2012wx}
E.~A. Theodorou and E.~Todorov.
\newblock {Relative entropy and free energy dualities: connections to path
  integral and kl control}.
\newblock pages 1466--1473, 2012.

\bibitem{EvangelosIFAC11}
S.~F. B.~J. Theodorou~E and S.~S.
\newblock {An Iterative Path Integral Stochastic Optimal Control Approach for
  Learning Robotic Tasks}.
\newblock pages 1--8, Apr. 2011.

\bibitem{Todorov:2009wja}
E.~Todorov.
\newblock {Efficient computation of optimal actions}.
\newblock {\em Proceedings of the National Academy of Sciences (PNAS)},
  106(28):11478--11483, 2009.

\bibitem{sdpt3}
K.-C. Toh, M.~J. Todd, and R.~H. T{\"u}t{\"u}nc{\"u}.
\newblock {SDPT3 -- a MATLAB software package for semidefinite programming}.
\newblock {\em Optimization Methods and Software}, 11(1-4):545--581, 1999.

\end{thebibliography}

\end{document}